\newtheorem{thm}{Theorem}
\numberwithin{Definition}{section}
\numberwithin{Lemma}{section}
\numberwithin{Proposition}{section}
\numberwithin{Corollary}{section}
\numberwithin{Example}{section}
\numberwithin{Remark}{section}
\begin{document}
\begin{center}
\textbf{ASYMPTOTIC EXPANSION OF THE WAVELET TRANSFORM WITH ERROR TERM}
\end{center}
\begin{center}
R S Pathak and Ashish Pathak
\end{center}
\begin{center}
\textbf{Abstract }
\end{center}
Using Wong's technique asymptotic expansion for the wavelet transform is derived and thereby asymptotic expansions for Morlet wavelet transform, Mexican Hat wavelet transform and Haar wavelet transform are obtained.
\\
\textbf{2000 Mathematics Subject Classification}. 44A05,41A60.\\
\textbf{Keywords and phrases}. Asymptotic expansion, Wavelet transform, Fourier transform, Mellin transform.
\footnote{ This work is contained in the research monograph " The Wavelet Transform" by Prof. R S Pathak and edited by Prof. C. K. Chui (Stanford University, U.S.A.) and published by Atlantis Press/World Scientific (2009), ISBN: 978-90-78677-26-0, pp:154-164}
\section{Introduction}The wavelet transform of $ f $ with respect to the wavelet $ \psi $ is defined by
\begin{equation}
\label{eq:Int-510.2}
 (W_\psi f)(b,a) = \frac{1}{\sqrt{a}}\int_{-\infty}^\infty f(t)
  \overline{\psi\left(\frac{t-b}{a}\right)}\,dt,b\in \mathbb{R},a>0,
\end{equation}
provided the integral exists~\cite{Debnath}. Using Fourier transform it can also be expressed as
\begin{equation}
\label{eq:Int-510.3}
(W_\psi f)(b,a) = \frac{\sqrt{a}}{2\pi}\int_{-\infty}^\infty \hat f(\omega)e^{ib\omega}
  \overline{\hat \psi}(a\omega)d\omega,
\end{equation}
where
\begin{equation}
\nonumber \hat f(\omega)= \int_{-\infty}^\infty e^{-it\omega} f(t)dt.
\end{equation}
Asymptotic expansion with explicit error term for the general integral
\begin{equation}
\label{eq:Int-510.4}
I(x)=\int_0^\infty f(t)h(x t) dx,
\end{equation}
where $h(t)$ is an oscillatory function, was obtained by Wong \cite{Wong},
\cite{Rwong} under different conditions on $g$ and $h$. Then the asymptotic
expansion for (\ref{eq:Int-510.3}) can be obtained by setting $g(t) =
e^{ibt}\hat{f}(t)$ for fixed $b \in \mathbb{R}$. Let us recall basic
results from \cite{Rwong} which will be used in the present investigation.
\vspace{.5pc} Here we assume that $g(t)$ has an expansion of the form
\begin{eqnarray}
\label{eq:Int-151.5}
    g(t) \nonumber &\sim & \sum^{\infty}_{s=0}c_s t^{s+\lambda-1} \;\;\; \textrm{as}\;\;
    t\rightarrow 0,\\
    &=&\sum^{n-1}_{s=0}c_s t^{s+\lambda-1}+g_n(t)
\end{eqnarray}
where $0<\lambda \leq 1.$ Regarding the function $h$, we assume that
as $t\rightarrow 0+,$
\begin{eqnarray}
\label{eq:Int-152.5}
h(t) = O(t^\rho), \;\;\; \rho + \lambda >0,
\end{eqnarray}
and that as $t\rightarrow +\infty,$
\begin{eqnarray}
\label{eq:Int-153.5}
h(t) \sim \exp(i \tau t^p)\sum^{\infty}_{s=0} b_s t^{-s-\beta},
\end{eqnarray}
where $\tau \neq 0$ is real, $p \geq 1$ and $0<\beta \leq 1.$ Let
$M[h; z]$ denote the generalized Mellin transform of $h$ defined by
\begin{eqnarray}
\label{eq:Int-154.5}
M[h; z]=\lim_{\varepsilon \rightarrow \, 0+}\int^{\infty}_{0}
t^{z-1}h(t) \exp (-\varepsilon t^p) dt.
\end{eqnarray}
This, together with  (\ref{eq:Int-196.5}) and \cite[p.216]{Rwong}, gives
\begin{eqnarray}
\label{eq:Int-155.5}
I(x) = \sum^{n-1}_{s=0}c_s M[h;
s+\lambda]x^{-s-\lambda}+\delta_n(x),
\end{eqnarray}
where
\begin{eqnarray}
\label{eq:Int-156.5}
\delta_n(x)=\lim_{\varepsilon \rightarrow
0+}\int^{\infty}_{0}g_n(t)h(xt)\exp(-\varepsilon t^p)dt.
\end{eqnarray}
If we now define recursively $h^\circ (t)=h(t)$  and
\begin{eqnarray*}
h^{(-j)}(t)=-\int^{\infty}_{t}h^{(-j+1)}(u) du, \;\;\; j=1,2,...,
\end{eqnarray*}
Repeated integration by part, we have
\begin{eqnarray}
\label{eq:Int-210.5}
h^{(-j)}(t)\sim exp(i\tau t^p) \sum_{s=0}^\infty b_s^{(j)} t^{-\mu_{s,j}}, \,\,\,\,\,\,\,\, as \,\,\,\, \rightarrow \infty,
\end{eqnarray}
where $b_s^{(j)}$ are some constants and for each $j$, and ${\mu_{s,j}}$ is a monotonically increasing sequence of positive numbers depending on $p$ and $ \beta $.\\
Then conditions of validity of aforesaid results are given by the
following \cite[Theorem 6, p.217]{Rwong}:
\begin{thm}
\label{thm:ch1sec2-2.4}.
Assume that (i)
$g^{(m)}(t)$ \textit{is continuous on} $(0,\infty)$, \textit{where}
$m$ \textit{is a non-negative integer}; (ii) $g(t)$ \textit{has an
expansion of the form} (\ref{eq:Int-151.5}), \textit{and the expansion is} $m$
\textit{times differentiable}; (iii) $h(t)$ \textit{satisfies}
(\ref{eq:Int-152.5}) \textit{and} (\ref{eq:Int-153.5}) \textit{and} (iv) \textit{and as} $t
\rightarrow \infty, t^{-\beta}g^{(j)}(t)=O(t^{-1-\varepsilon})$  for $j=0,
1,..., m$ \textit{and for some} $\varepsilon > 0.$ \textit{Under
these conditions, the result} (\ref{eq:Int-155.5}) \textit{holds with}
\begin{eqnarray}
\label{eq:Int-157.5}
\delta_n(x)=\frac{(-1)^m}{x^m}\int^{\infty}_{0} g^{(m)}_{n}(t)
h^{(-m)}(xt) dt,
\end{eqnarray}
\textit{where} $n$ \textit{is the smallest positive integer such
that} $\lambda+n > m.$
\end{thm}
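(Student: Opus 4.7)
The plan is to start from the regularized representation (\ref{eq:Int-156.5}),
\[
\delta_n(x)=\lim_{\varepsilon\to 0+}\int_0^\infty g_n(t)\,h(xt)\,e^{-\varepsilon t^p}\,dt,
\]
and iterate integration by parts $m$ times, each step shifting one derivative from $h$ onto $g_n$. The enabling identity, immediate from the recursive definition of $h^{(-j)}$, is
\[
h^{(-j+1)}(xt)=\frac{1}{x}\,\frac{d}{dt}\,h^{(-j)}(xt),\qquad j\ge 1.
\]
Applying it once and then iterating, one obtains formally
\[
\int_0^\infty g_n(t)\,h(xt)\,e^{-\varepsilon t^p}\,dt
=\frac{(-1)^m}{x^m}\int_0^\infty g_n^{(m)}(t)\,h^{(-m)}(xt)\,e^{-\varepsilon t^p}\,dt+B_\varepsilon(x),
\]
where $B_\varepsilon(x)$ gathers boundary contributions and the cross terms created by differentiating $e^{-\varepsilon t^p}$. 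Proving the theorem therefore reduces to (a) showing that $B_\varepsilon(x)\to 0$ and (b) justifying the exchange of $\lim_{\varepsilon\to 0+}$ with the remaining integral.

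For step (a), the boundary terms at $t=+\infty$ are killed at once by the regularizer $e^{-\varepsilon t^p}$ for every $\varepsilon>0$, regardless of algebraic growth of $g_n^{(j)}(t)\,h^{(-j-1)}(xt)$. At $t=0+$, hypothesis (ii) permits termwise differentiation of (\ref{eq:Int-151.5}), whence $g_n^{(j)}(t)=O(t^{n+\lambda-1-j})$; since $\lambda+n>m$, this exponent is strictly positive for $0\le j\le m-1$, so $g_n^{(j)}(0+)=0$. Meanwhile (\ref{eq:Int-152.5}) with $\rho+\lambda>0$ makes $h$ integrable near the origin, so each $h^{(-j)}(0+)$ is finite, and the boundary products vanish. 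The extra cross terms from $\partial_t(e^{-\varepsilon t^p})=-\varepsilon p t^{p-1}e^{-\varepsilon t^p}$ each carry an explicit factor of $\varepsilon$ and can be bounded by an $\varepsilon$-independent integrable majorant, hence vanish as $\varepsilon\to 0+$.

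Step (b) is the main obstacle: one must establish that the unregularized integral in (\ref{eq:Int-157.5}) is absolutely convergent, so that dominated convergence applies. Near $t=0$, continuity of $g_n^{(m)}$ (hypothesis (i)) combined with the estimate $g_n^{(m)}(t)=O(t^{n+\lambda-1-m})$ and the benign behavior of $h^{(-m)}$ inherited from (\ref{eq:Int-152.5}) by successive antidifferentiation makes the integrand locally integrable. The delicate part is the tail: from hypothesis (iv), $g^{(m)}(t)=O(t^{\beta-1-\varepsilon})$, and from (\ref{eq:Int-210.5}) the leading behavior $|h^{(-m)}(xt)|\le C\,t^{-\mu_{0,m}}$ holds with $\mu_{0,m}\ge\beta$ (each antidifferentiation of the oscillatory symbol $e^{i\tau t^p}t^{-\beta}$ gains a factor $t^{1-p}/(i\tau p)$, never worsening the amplitude since $p\ge 1$). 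The product is therefore $O(t^{-1-\varepsilon})$, and the polynomial subtraction hidden in $g_n^{(m)}-g^{(m)}$ is controlled by exploiting the oscillation in $h^{(-m)}$ coming from (\ref{eq:Int-153.5}) through one further integration by parts. With this $L^{1}$-majorant in place, dominated convergence removes the regularizer and yields the formula (\ref{eq:Int-157.5}).
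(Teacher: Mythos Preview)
Your overall strategy---iterated integration by parts on the Abel-regularized integral (\ref{eq:Int-156.5}), followed by removal of the regularizer---is exactly the paper's route. Your boundary analysis at $t=0+$ and $t=\infty$ is sound, and step~(b) is correct: once $m$ integrations by parts have been performed, $g_n^{(m)}(t)h^{(-m)}(xt)$ is genuinely in $L^1(0,\infty)$ (the subtracted polynomial contributes at worst $O(t^{-1})$ since $n+\lambda\le m+1$, and $|h^{(-m)}|=O(t^{-\beta-m(p-1)})$), so dominated convergence applies. The paper records this last step as an appeal to Wong's Lemma~1.

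The gap is in your handling of the cross terms produced by $\partial_t(e^{-\varepsilon t^p})$. You claim the integrand (stripped of its $\varepsilon$ prefactor) admits an $\varepsilon$-independent integrable majorant; this is false in general. At the $j$-th step the relevant integrand is $g_n^{(j-1)}(t)\,h^{(-j)}(xt)\,t^{p-1}$. The subtracted polynomial forces $g_n^{(j-1)}(t)\asymp t^{\,n+\lambda-1-j}$ at infinity, while $|h^{(-j)}(xt)|\,t^{p-1}=O(t^{-\beta-(j-1)(p-1)})$; for $p=1$ and $j=1$ the product is $O(t^{\,n+\lambda-2-\beta})$, which is \emph{not} integrable once $n+\lambda>1+\beta$---and this is unavoidable whenever $m\ge 2$. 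What actually kills the cross term is not a majorant but the oscillation $e^{i\tau(xt)^p}$ carried by $h^{(-j)}$: the regularized integral $\int_0^\infty g_n^{(j-1)}(t)h^{(-j)}(xt)t^{p-1}e^{-\varepsilon t^p}\,dt$ has a \emph{finite} Abel limit as $\varepsilon\to 0+$ (by the same mechanism underlying the generalized Mellin transform (\ref{eq:Int-154.5})), and hence its $\varepsilon$-multiple tends to zero. The paper disposes of this by citing Wong's Lemma~1 and Lemma~2, which package exactly that Abelian argument; your dominated-convergence shortcut does not go through here and must be replaced by it.
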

\begin{proof}
Integrating by part (\ref{eq:Int-156.5})  we get
\begin{eqnarray}
\label{eq:Int-206.5}
\int_0^\infty f_n(t) h(xt)  e^{- \epsilon t^p} dt \nonumber &=&  - \frac{1}{x}\int_0^\infty f_n(t)e^{-\epsilon t^p} h^{(-1)}(xt)dt \\
&& + \; \frac{\epsilon p}{x} \int_0^\infty f_n(t) h^{(-1)}(xt)t^{p-1} e^{-\epsilon t^p} dt
\end{eqnarray}
the integrated term vanishing due to $\rho+\lambda>0$ and condition \textit{(iv)} and the asymptotic behaviour in (\ref{eq:Int-210.5}). The same reasoning, together with Lemma1 and Lemma2, ensures that the second term on the right-hand side of (\ref{eq:Int-206.5}) tends to zero as $\epsilon \rightarrow 0+$.\\
Thus
\begin{eqnarray}
\label{eq:Int-207}
\delta_n(x)=\left(- \frac{1}{x}\right) \lim_{\epsilon \rightarrow 0+} \int_0^\infty f'_n(t) h^{(-1)}(xt) e^{-\epsilon t^p}dt.
\end{eqnarray}
Repeated application of this technique shows that
\begin{eqnarray}
\label{eq:Int-208}
\delta_n(x)\nonumber &=&\frac{(-1)^m}{x^m} \lim_{\epsilon \rightarrow 0+} \int_0^\infty f^{(m)}_n(t) h^{(-m)}(xt) e^{-\epsilon t^p}dt\\ &=&  \frac{(-1)^m}{x^m} \int_0^\infty f^{(m)}_n(t) h^{(-m)}(xt) e^{-\epsilon t^p}dt.
\end{eqnarray}
The last equality again follows from Lemma1.
\end{proof}
\vspace{.5pc} The aim of the present paper is to derive asymptotic
expansion of the wavelet transform given by (\ref{eq:Int-510.3}) for large values
of $a$, using formula (\ref{eq:Int-155.5}). We also obtain asymptotic expansions
for the special transforms corresponding to Morlet wavelet, Mexican
hat wavelet and Haar wavelet.\\
\section{Asymptotic expansion for large $a$}
In this section using aforesaid technique, we obtain asymptotic
expansion of $(W_\psi f)(b, a)$ for large values of $a$, keeping $b$
fixed. We have
\begin{eqnarray}
\label{eq:Int-158.5}
 (W_\psi f)(b, a)\nonumber &=&\frac{\sqrt{a}}{2\pi}\int^{\infty}_{-\infty}
    e^{ib \omega} \overline{\hat{\psi}}(a\omega) \hat{f}(\omega)
    d\omega \\ \nonumber & = & \frac{\sqrt{a}}{2\pi}\biggr\{\int^{\infty}_{0}
    e^{ib \omega} \overline{\hat{\psi}}(a\omega) \hat{f}(\omega)
    d\omega \\ \nonumber &&
+\; \int^{\infty}_{0}
    e^{-ib \omega} \overline{\hat{\psi}}(-a\omega) \hat{f}(-\omega)
    d\omega \biggr\} \\ &=& \frac{\sqrt{a}}{2\pi} (I_1+I_2), \; \textrm{say}.
\end{eqnarray}
Let us set
\begin{eqnarray}
\label{eq:Int-159.5}
h(\omega)=\overline{\hat{\psi}}(\omega).
\end{eqnarray}
Assume that
\begin{eqnarray}
\label{eq:Int-160.5}
\overline{\hat{\psi}}(\omega) \sim \exp(i \tau \omega^p)
\sum^{\infty}_{\tau=0}b_r \omega^{-r-\beta},\;\;\; \beta>0,\;
\omega\rightarrow +\infty, \; \tau \neq 0, \; p \geq 1,
\end{eqnarray}
and
\begin{eqnarray}
\label{eq:Int-161.5}
\hat{f}(\omega)\sim\sum^{\infty}_{s=0} c_s \omega^{s+\lambda-1}\;\;\;
\textrm{as} \;\; \omega \rightarrow 0.
\end{eqnarray}
where $0<\lambda \leq 1$. Also assume that as $ \omega \rightarrow 0, $
\begin{eqnarray}
\label{eq:Int-162.5}
h(\omega) = \overline{\hat{\psi}}(\omega)=O(\omega^\rho), \;\;\;
\rho+\lambda>0.
\end{eqnarray}
Then, as $ \omega \rightarrow 0 $,
\begin{align}
\label{eq:Int-165.5}
    g(\omega)\nonumber &:= e^{ib \omega}\hat{f}(\omega)\\
     \nonumber & \sim
    \sum^{\infty}_{s=0}c_s
    \omega^{s+\lambda-1}\sum^{\infty}_{r=0}\frac{(ib\omega)^r}{r!}\\ \nonumber
    &=\sum^{\infty}_{s=0}\sum^{\infty}_{r=0}c_s\frac{(ib)^r}{r!}\omega^{s+\lambda-1+r}\\
   \nonumber  &=\sum^{\infty}_{s=0}\left\{\sum^{s}_{r=0}\frac{(ib)^r}{r!}c_{s-r}\right\}
    \omega^{s+\lambda-1}\\
    &=\sum^{\infty}_{s=0} d_s\omega^{s+\lambda-1},
\end{align}
where
\begin{eqnarray}
\label{eq:Int-163.5}
d_s =\sum^{s}_{r=0}\frac{(ib)^r}{r!}c_{s-r}.
\end{eqnarray}
For each $n\geq 1,$ we write
\begin{eqnarray}
\label{eq:Int-164.5}
g(\omega)=\sum^{n-1}_{s=0}d_s \omega^{s+\lambda-1}+g_n(\omega).
\end{eqnarray}
The generalized Mellin transform of $h$ is defined by
\begin{eqnarray}
\label{eq:Int-166.5}
 M[h; z_1] =  \lim_{\varepsilon \rightarrow0+}\int^\infty_0\omega^{z_1-1}h(\omega) e^{-\varepsilon
    \omega}d\omega
\end{eqnarray}
Then by (\ref{eq:Int-155.5}),
\begin{eqnarray}
\label{eq:Int-170.5}
I_1(a)=\sum^{n-1}_{s=0}d_s M[h;
s+\lambda]a^{-s-\lambda}+\delta^{1}_{n}(a),
\end{eqnarray}
where
\begin{eqnarray}
\label{eq:Int-167.5}
\delta^{1}_{n}(a)=\lim_{\varepsilon \rightarrow
    0+}\int^{\infty}_{0}g_n(\omega) h(a\omega) e^{-\varepsilon
    \omega}d\omega.
\end{eqnarray}
Also, from (\ref{eq:Int-162.5})  we have
\begin{eqnarray}
\label{eq:Int-168.5}
h(-\omega)=O(\omega^\rho), \;\;\; \omega \rightarrow 0,\;\;
\rho+\lambda>0
\end{eqnarray}
and
\begin{eqnarray}
\label{eq:Int-169.5}
M[h(-\omega);z_1]=\lim_{\varepsilon \rightarrow
    0+}\int^{\infty}_{0} \omega^{z_1-1}h(-\omega) e^{-\varepsilon
    \omega}d\omega.
\end{eqnarray}
Hence
\begin{eqnarray}
\label{eq:Int-171.5}
I_2(a)=\sum^{n-1}_{s=0}d_s (-1)^{s+\lambda+1}M[h(-\omega);
s+\lambda]a^{-s-\lambda}+\delta^{2}_{n}(a),
\end{eqnarray}
where
\begin{eqnarray}
\label{eq:Int-172.5}
\delta^{2}_{n}(a)=\lim_{\varepsilon \rightarrow
    0+}\int^{\infty}_{0}g_n(-\omega) h(-a\omega) e^{-\varepsilon
    \omega}d\omega.
    \end{eqnarray}
Finally, from (\ref{eq:Int-158.5}),(\ref{eq:Int-170.5}) and (\ref{eq:Int-171.5}) we get the
asymptotic expansion:
\begin{eqnarray}
\label{eq:Int-174.5}
(W_\psi f)(b,a) \nonumber
   & =&\frac{\sqrt{a}}{2\pi}\biggr\{\sum^{n-1}_{s=0}d_s\biggr(M\left[\overline{\hat{\psi}}(\omega);
    s+\lambda\right] +(-1)^{s+\lambda+1}\\ && \times \; M\left[\overline{\hat{\psi}}(-\omega)
    s+\lambda\right]\biggr) a^{-s-\lambda} +  \delta_n(a)\biggr\},
\end{eqnarray}
where
\begin{eqnarray}
\label{eq:Int-175.5}
\delta_n(a)\nonumber &=&\lim_{\varepsilon \rightarrow
    0+}\biggr(\int^{\infty}_{0}g_n(\omega) h(a\omega) e^{-\varepsilon
    \omega}d\omega\\ && + \;\int^{\infty}_{0}g_n(-\omega) h(-a\omega) e^{-\varepsilon
    \omega}d\omega\biggr).
\end{eqnarray}
Since $g(\omega) = e^{ib\omega} \hat{f}(\omega),$ the continuity of
$\hat{f}^{(m)}(\omega)$ implies continuity of $g^{(m)}(\omega)$.
Using Theorem\ref{thm:ch1sec2-2.4} we get the following existence theorem for
formula (\ref{eq:Int-175.5}).
\begin{thm}
\label{thm:ch1sec2-2.5}
Assume that (i)
$\hat{f}^{(m)}(\omega)$ is continuous on $(-\infty,
\infty),$ where $m$ is a nonnegative integer;(ii)
$\hat{f}(\omega)$ has asymptotic expansion of the form
(\ref{eq:Int-161.5}) and the expansion is $ m $ times
differentiable,(iii) $\overline{\hat{\psi}}(\omega)$ satisfies
(\ref{eq:Int-159.5}) and (\ref{eq:Int-160.5}) and (iv) as
$\omega\rightarrow \infty,
\omega^{-\beta}\hat{f}^{(j)}(\omega)=O(\omega^{-1-\varepsilon})$
for $j=0, 1, 2,..., m$ and for some $\varepsilon
>0.$ Under these conditions, the result (\ref{eq:Int-174.5}) holds with
\begin{eqnarray}
\label{eq:Int-178.5}
\delta_n(a)=\frac{(-1)^m}{a^m}\int^{\infty}_{-\infty}g^{(m)}_{n}(\omega)(\overline{\hat{\psi}(a\omega)})^{(-m)}d\omega,
\end{eqnarray}
where $n$ is the smallest positive integer such
that $\lambda+n>m.$
\end{thm}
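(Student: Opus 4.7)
The plan is to reduce Theorem \ref{thm:ch1sec2-2.5} to two separate applications of Theorem \ref{thm:ch1sec2-2.4}, one for each of the half-line integrals $I_1(a)$ and $I_2(a)$ in the splitting (\ref{eq:Int-158.5}), and then to recombine the two resulting error terms into the single integral over $\mathbb{R}$ that appears in (\ref{eq:Int-178.5}). The leading expansion (\ref{eq:Int-174.5}) has already been derived formally in the preceding discussion, so only the validity and the error representation are at stake.

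For $I_1(a)$ I would set $g(\omega) = e^{ib\omega}\hat{f}(\omega)$ and $h(\omega) = \overline{\hat{\psi}}(\omega)$, with $x = a$ playing the role of the asymptotic parameter. The four hypotheses of Theorem \ref{thm:ch1sec2-2.4} are then checked in turn: continuity of $g^{(m)}$ is immediate from hypothesis (i) and smoothness of $e^{ib\omega}$; the $m$-times differentiable expansion (\ref{eq:Int-164.5}) with coefficients $d_s$ of (\ref{eq:Int-163.5}) follows from hypothesis (ii) by Cauchy-multiplying (\ref{eq:Int-161.5}) against the Taylor series of $e^{ib\omega}$, this operation being compatible with term-by-term differentiation; hypothesis (iii) is the content of (\ref{eq:Int-160.5}) and (\ref{eq:Int-162.5}); and hypothesis (iv) for $g^{(j)}$ follows from hypothesis (iv) for $\hat{f}^{(j)}$ by Leibniz's rule, because each derivative of $e^{ib\omega}$ contributes only a bounded factor $|b|^{k}$. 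Theorem \ref{thm:ch1sec2-2.4} then gives
\begin{equation*}
\delta^{1}_{n}(a) = \frac{(-1)^m}{a^m}\int_{0}^{\infty} g^{(m)}_n(\omega)\, h^{(-m)}(a\omega)\,d\omega.
\end{equation*}

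For $I_2(a)$ I apply the same argument to the reflected data $\omega \mapsto -\omega$: the function $e^{-ib\omega}\hat{f}(-\omega)$ has an expansion of the form (\ref{eq:Int-165.5}) with the same exponents $\lambda, \rho, \beta, p$ (only the oscillatory factor changes from $\exp(i\tau\omega^p)$ to $\exp(-i\tau\omega^p)$), and the decay hypothesis (iv) is clearly preserved under reflection. After Theorem \ref{thm:ch1sec2-2.4} is invoked and the substitution $\omega \mapsto -\omega$ is made in the resulting half-line integral, $\delta^{2}_{n}(a)$ becomes an integral of $g^{(m)}_n(\omega)(\overline{\hat{\psi}}(a\omega))^{(-m)}$ over $(-\infty, 0)$, matching the domain complement of the $\delta^{1}_{n}$ integral. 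Adding $\delta^{1}_{n}(a) + \delta^{2}_{n}(a)$ then collects into the single integral over $\mathbb{R}$ displayed in (\ref{eq:Int-178.5}).

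The main obstacle I anticipate is the verification of hypothesis (iv) for $g^{(j)}$ rather than $\hat{f}^{(j)}$: Leibniz's formula reduces it to bounds on $\omega^{-\beta}\hat{f}^{(k)}(\omega)$ for every $k \le j$, and one must appeal uniformly in $k$ to hypothesis (iv), which is why that hypothesis is stated for all $j = 0, 1, \dots, m$ and not only for $j = m$. A secondary piece of bookkeeping is ensuring that the antiderivative notation $h^{(-m)}$ obtained from the reflected application agrees, with the correct sign convention and number of minus signs from the chain rule, with $(\overline{\hat{\psi}}(a\omega))^{(-m)}$ evaluated at negative $\omega$; once this bookkeeping is fixed, the fusion of $\delta^{1}_{n}$ and $\delta^{2}_{n}$ into (\ref{eq:Int-178.5}) is purely mechanical.
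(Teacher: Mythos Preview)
Your proposal is correct and follows exactly the route the paper takes: the paper's entire justification for Theorem~\ref{thm:ch1sec2-2.5} is the one-line remark preceding it, namely that continuity of $\hat{f}^{(m)}$ implies continuity of $g^{(m)}$ (since $g(\omega)=e^{ib\omega}\hat{f}(\omega)$) and hence Theorem~\ref{thm:ch1sec2-2.4} applies to each of $I_1$ and $I_2$. Your write-up is in fact considerably more careful than the paper's, since you explicitly verify the transfer of hypothesis~(iv) via Leibniz's rule and flag the sign bookkeeping needed to merge the two half-line error integrals into the single integral over $\mathbb{R}$ in~(\ref{eq:Int-178.5}).
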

\vspace{.5pc} In the following sections we shall obtain asymptotic
expansions for certain special cases of the general wavelet
transform.
 \section{MORLET WAVELET TRANSFORM}
In this section we choose
\begin{eqnarray*}
\psi(t)=e^{i\omega_0 t-t^2/2}.
\end{eqnarray*}
Then from ~\cite[p. 373]{Debnath} we have
\begin{eqnarray*}
\hat{\psi}(\omega)=\sqrt{2\pi}e^{\frac{-(\omega-\omega_0)^2}{2}},
\end{eqnarray*}
which is exponentially decreasing. Therefore, Theorem\ref{thm:ch1sec2-2.4} is not
directly applicable, but a slight modification of the technique
works well.
Assume that $\hat{f}$ has an asymptotic expansion of the form
(\ref{eq:Int-161.5}). In this case we have
\begin{eqnarray}
\label{eq:Int-180.5}
    h(\omega)\nonumber &= &\overline{\hat{\psi}}(\omega) \\
    &=&\sqrt{2\pi}e^{\frac{-(\omega-\omega_0)^2}{2}}
\end{eqnarray}
and
\begin{eqnarray}
\label{eq:Int-181.5}
h(\omega)=O(1)\;\;\;\;\textrm{as}\;\;\; \omega \rightarrow 0.
\end{eqnarray}
Then from (\ref{eq:Int-170.5}) and (\ref{eq:Int-180.5}), we get
\begin{eqnarray}
\label{eq:Int-182.5}
I_1(a) \nonumber &=& \sum^{n-1}_{s=0}d_s
M\left[\sqrt{2\pi}e^{\frac{-(\omega-\omega_0)^2}{2}};
s+\lambda\right]a^{-s-\lambda}\\
&&+\; \lim_{\varepsilon\rightarrow
0+}\int^{\infty}_{0}g_n(\omega)\sqrt{2\pi}e^{\frac{-(a\omega-\omega_0)^2}{2}}e^{-\varepsilon
\omega}d\omega,
\end{eqnarray}
where
\begin{eqnarray*}
M\left[\sqrt{2\pi}e^{\frac{-(\omega-\omega_0)^2}{2}};
s+\lambda\right]&=&\sqrt{2\pi}\int^{\infty}_{0}\omega^{s+\lambda-1}e^{\frac{-(\omega-\omega_0)^2}{2}}d\omega\\
&=& \sqrt{2\pi} e^{-\frac{\omega^2_0}{2}}
\int^{\infty}_{0}\omega^{s+\lambda-1}
e^{-\frac{\omega^2}{2}+\omega\omega_0}d\omega.
\end{eqnarray*}
Evaluating the last integral by means of formula ~\cite[(31), p.320]{Erdelyi}:
\begin{eqnarray*}
\int^{\infty}_{0}x^{s-1} e^{-(x^2/2)-\beta x}dx=
e^{(\beta^2/4)}\Gamma(s) D_{-s}(\beta), \;\;\; Re(s)>0,
\end{eqnarray*}
where $D_{-\nu}(x)$ denotes parabolic cylinder function, we get
\begin{eqnarray}
\label{eq:Int-184.5}
M\left[\sqrt{2\pi}e^{\frac{-(\omega-\omega_0)^2}{2}};
s+\lambda\right]=\sqrt{2\pi}e^{\frac{-\omega^2_0}{4}}\Gamma(s+\lambda)D_{-(s+\lambda)}(-\omega_0),
\;\;\; s+\lambda>0.
\end{eqnarray}
From (\ref{eq:Int-182.5}) and (\ref{eq:Int-184.5}), we get
\begin{eqnarray}
\label{eq:Int-185.5}
    I_1(a)\nonumber &=&\sqrt{2\pi}e^{\frac{-\omega^2_0}{4}}\sum^{n-1}_{s=0}d_s
    \Gamma(s+\lambda)D_{-(s+\lambda)}(-\omega_0) \,\, a^{-s-\lambda}\\
    &&+ \;
    \int^{\infty}_{0}g_n(\omega)\sqrt{2\pi}e^{\frac{-(a\omega-\omega_0)^2}{2}}d\omega.
    \end{eqnarray}
Similarly, we get
\begin{eqnarray}
\label{eq:Int-186.5}
    I_2(a)\nonumber&=&\sqrt{2\pi}e^{\frac{-\omega^2_0}{4}}\sum^{n-1}_{s=0}d_s
    \Gamma(s+\lambda)(-1)^{s+\lambda-1}D_{-(s+\lambda)}(\omega_0)\,\, a^{-s-\lambda}\\
    && + \; \int^{\infty}_{0}g_n(-\omega)\sqrt{2\pi}e^{-\frac{(a\omega+\omega_0)^2}{2}}d\omega.
    \end{eqnarray}
Finally, using (\ref{eq:Int-158.5}), (\ref{eq:Int-185.5}) and (\ref{eq:Int-186.5}) we get
\begin{eqnarray}
\label{eq:Int-187.5}
(W_\psi f)(b, a)\nonumber &=&e^{\frac{-\omega^2_0}{4}}\sum^{n-1}_{s=0}d_s
    \Gamma(s+\lambda)\left[D_{-(s+\lambda)}(-\omega_0)\right.\\
    && + \left. \; (-1)^{s+\lambda-1}D_{-(s+\lambda)}(\omega_0)\right]a^{-s-\lambda+\frac{1}{2}}+\delta_n(a),
   \end{eqnarray}
where
\begin{eqnarray*}
\delta_n(a)=\sqrt{a}\int^{\infty}_{0}g_n(\omega)e^{-(a\omega-\omega_0)^2}d\omega+\sqrt{a}\int^{\infty}_{0}g_n(-\omega)e^{-(a\omega+\omega_0)^2}d\omega.
\end{eqnarray*}
Using Theorem \ref{thm:ch1sec2-2.5} we get the following existence theorem for
formula (\ref{eq:Int-187.5}).
\begin{thm}
\label{thm:ch1sec3-3.1}
Assume that $\hat{f}(\omega)$ satisfies conditions of Theorem \ref{thm:ch1sec2-2.5}.
Then the result (\ref{eq:Int-187.5}) holds with
\begin{eqnarray*}
\delta_n(a)=(-1)^m
a^{-m+1/2}\int^{\infty}_{-\infty}g^{(m)}_{n}(\omega)\left(
e^{\frac{-(a\omega-\omega_0)^2}{2}}\right)^{(-m)}d\omega,
\end{eqnarray*}
where $n$ is the smallest positive integer such
that $\lambda+n>m.$
\end{thm}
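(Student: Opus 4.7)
The plan is to apply, separately, the integration-by-parts technique from the proof of Theorem~\ref{thm:ch1sec2-2.4} to each of the two remainder integrals in (\ref{eq:Int-185.5}) and (\ref{eq:Int-186.5}), and then to fold the two halves into a single integral over $\mathbb{R}$. Set
$$R_1(a):=\int_0^\infty g_n(\omega)\,\sqrt{2\pi}\,e^{-(a\omega-\omega_0)^2/2}\,d\omega,\qquad R_2(a):=\int_0^\infty g_n(-\omega)\,\sqrt{2\pi}\,e^{-(a\omega+\omega_0)^2/2}\,d\omega,$$
so that, combining with (\ref{eq:Int-158.5}), the quantity to be computed is $\delta_n(a)=\tfrac{\sqrt a}{2\pi}(R_1(a)+R_2(a))$.

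Theorem~\ref{thm:ch1sec2-2.4} is formally stated for oscillatory kernels of the form (\ref{eq:Int-153.5}), but its integration-by-parts argument uses only two features of $h$: controlled behaviour of the iterated antiderivatives $h^{(-j)}$ at infinity, and vanishing of the boundary terms at $0$ and at $\infty$. For the Gaussian $K(t):=e^{-(t-\omega_0)^2/2}$, each iterate $K^{(-j)}(t):=-\int_t^\infty K^{(-j+1)}(u)\,du$ is bounded on $\mathbb{R}$ and decays super-polynomially as $t\to+\infty$, which dominates any polynomial-rate growth of $g_n^{(j)}$ permitted by hypothesis~(iv) of Theorem~\ref{thm:ch1sec2-2.5}; moreover from (\ref{eq:Int-164.5}) we have $g_n(\omega)=O(\omega^{n+\lambda-1})$ as $\omega\to 0^+$, so with $n$ chosen so that $n+\lambda>m$ the boundary term at $0$ vanishes at each of the $m$ integrations by parts.

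Carrying out $m$ integrations by parts in $R_1(a)$ yields
$$R_1(a)=\frac{(-1)^m}{a^m}\int_0^\infty g_n^{(m)}(\omega)\,\sqrt{2\pi}\,\bigl(e^{-(a\omega-\omega_0)^2/2}\bigr)^{(-m)}\,d\omega,$$
and the analogous computation for $R_2(a)$ gives the same formula with $g_n(-\omega)$ and kernel centred at $-\omega_0$. I would then substitute $\omega\mapsto-\omega$ in the $R_2$ expression; since $(-a\omega+\omega_0)^2=(a\omega-\omega_0)^2$, the $R_2$-integrand matches the $R_1$-integrand on $(-\infty,0)$. Adding the two half-line integrals and absorbing the prefactor $\sqrt a/(2\pi)$ produces the single integral over $\mathbb{R}$ claimed in the statement.

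The main delicate point will be the bookkeeping in this folding step: the sign conventions in the recursive definition of $K^{(-m)}$, the chain-rule factors of $a$ that appear when differentiating in $\omega$, and the orientation of the substitution $\omega\mapsto-\omega$ must all line up so that the two half-line pieces glue continuously across $\omega=0$ into one integrand. Once that match is verified, absolute convergence of the final integral is immediate from the super-polynomial decay of the Gaussian, used together with hypothesis~(iv), so no further estimate is required.
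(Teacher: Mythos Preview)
Your proposal is correct and follows the same approach as the paper: the paper gives no separate proof of Theorem~\ref{thm:ch1sec3-3.1} beyond the one-line remark ``Using Theorem~\ref{thm:ch1sec2-2.5} we get the following existence theorem,'' and your argument is precisely an unpacking of that invocation---integrating by parts $m$ times in each half-line remainder and then merging the two pieces into a single integral over $\mathbb{R}$. Your observation that the Gaussian kernel fails the oscillatory hypothesis (\ref{eq:Int-153.5}) but that the underlying integration-by-parts mechanism still applies is exactly the point the paper makes at the start of Section~3, and your flagging of the sign and antiderivative bookkeeping in the folding step is an honest acknowledgement of a detail the paper itself leaves implicit.
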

\section{MEXICAN HAT WAVELET TRANSFORM}
In this section we choose
\begin{eqnarray*}
\psi(t)=(1-t^2)e^{-t^2/2}.
\end{eqnarray*}
Then from ~\cite[p.372]{Debnath}
\begin{eqnarray}
\label{eq:Int-188.5}
h(\omega):=\hat{\psi}(\omega)=\sqrt{2\pi}\omega^2 e^{-\omega^2/2};
\end{eqnarray}
so that
\begin{eqnarray}
\label{eq:Int-189.5}
h(\omega)=O(\omega^2), \;\;\; \omega \rightarrow 0.
\end{eqnarray}
Assume that $\hat{f}$ has an asymptotic expansion of the form
(\ref{eq:Int-161.5}), and satisfies
\begin{eqnarray}
\label{eq:Int-190.5}
\hat{f}(\omega)=O(e^{\sigma \omega^2}), \;\;\;\omega \rightarrow
+\infty,
\end{eqnarray}
for some $\sigma >0$. Therefore,
\begin{eqnarray}
\label{eq:Int-191.5}
g(\omega):= e^{ib \omega} \hat{f}(\omega)=O(e^{\sigma \omega^2}),
\;\;\;\omega \rightarrow +\infty.
\end{eqnarray}
Then by (\ref{eq:Int-166.5}) and (\ref{eq:Int-188.5}), we get
\begin{eqnarray}
\label{eq:Int-192.5}
I_1(a)=\sum^{n-1}_{s=0}d_s M [\sqrt{2\pi}\omega^2e^{-\omega^2/2};
s+\lambda]a^{-s-\lambda}+\delta^{1}_{n}(a),
\end{eqnarray}
where
\begin{align*}
M [\sqrt{2\pi}\omega^2e^{-\omega^2/2};
s+\lambda]&=\sqrt{2\pi}\int^{\infty}_{0}\omega^{s+\lambda+1}e^{-\omega^2/2}d\omega\\
&=\sqrt{\pi}\,\, 2^{(s+\lambda+1)/2}\,\, \Gamma\left(\frac{s+\lambda+2}{2}\right),
\end{align*}
and
\begin{eqnarray}
\label{eq:Int-193.5}
\delta^{1}_{n}=\int^{\infty}_{0}g_n(\omega)\sqrt{2\pi}(a\omega)^2e^{-(a\omega)^2/2}d\omega.
\end{eqnarray}
Similarly, we get
\begin{eqnarray}
\label{eq:Int-194.5}
I_2(a)\nonumber &=&\sqrt{\pi}2^{(\lambda+1)/2}\sum^{n-1}_{s=0}d_s(-1)^{s+\lambda-1}2^{s/2}\Gamma\left(\frac{s+\lambda+2}{2}\right)a^{-s-\lambda}\\ &&+ \; \delta^{2}_{n}(a),
\end{eqnarray}
where
\begin{eqnarray}
\label{eq:Int-195.5}
\delta^{2}_{n}(a)=\int^{\infty}_{0}g_n(-\omega)\sqrt{2\pi}(a\omega)^2e^{-(a\omega)^2/2}d\omega.
\end{eqnarray}
Finally, using (\ref{eq:Int-158.5}), (\ref{eq:Int-192.5}) and (\ref{eq:Int-194.5}), we have
\begin{eqnarray}
\label{eq:Int-196.5}
(W_\psi f)(b,
a)\nonumber &=&\frac{2^{(\lambda+1)/2}}{\sqrt{\pi}}\sum^{n-1}_{s=0}d_s
2^{s/2}\Gamma\left(\frac{s+\lambda+2}{2}\right)\{1+(-1)^{s+\lambda-1}\}\\ && \times \; a^{-s-\lambda+1/2}
 +  \delta_n(a),
\end{eqnarray}
where
\begin{eqnarray*}
\delta_{n}(a)=2^{3/2}\sqrt{\pi}\int^{\infty}_{0}g_n(\omega)(a\omega)^2e^{-(a\omega)^2/2}d\omega.
\end{eqnarray*}
Existence theorem for formula (\ref{eq:Int-196.5}) is as follows:
\begin{thm}
\label{thm:ch1sec4-4.1}
Assume that $\hat{f}(\omega)$ satisfies conditions of Theorem\ref{thm:ch1sec2-2.5}
Then the result (\ref{eq:Int-196.5}) holds with
\begin{eqnarray*}
\delta_{n}(a)=\left(\frac{-1}{a}\right)^m
2^{3/2}\sqrt{\pi}\int^{\infty}_{-\infty}g^{(m)}_{n}(\omega)((a\omega)^2e^{-(a\omega)^2/2})^{(-m)}d\omega,
\end{eqnarray*}
where $n$ is the smallest positive integer such
that $\lambda+n>m.$
\end{thm}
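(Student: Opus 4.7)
The plan is to derive the formula for $\delta_{n}(a)$ by $m$-fold integration by parts in the $\omega$-variable, closely mirroring the argument of Theorem \ref{thm:ch1sec2-2.4}. The Mexican Hat kernel $h(\omega)=\sqrt{2\pi}\,\omega^{2}e^{-\omega^{2}/2}$ fails hypothesis (\ref{eq:Int-160.5}) of Theorem \ref{thm:ch1sec2-2.5} because it decays super-exponentially rather than carrying the oscillatory factor $\exp(i\tau\omega^{p})$; on the other hand this rapid decay makes the convergence regulariser $e^{-\varepsilon\omega^{p}}$ of (\ref{eq:Int-156.5}) unnecessary, since every integral that appears converges absolutely on its own. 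So the strategy is to imitate the integration-by-parts scheme in the proof of Theorem \ref{thm:ch1sec2-2.4} but without the intermediate $\varepsilon$-limit, exactly as in the Morlet treatment (Theorem \ref{thm:ch1sec3-3.1}).

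First I would fold the two half-line pieces $\delta_{n}^{1}(a)$ and $\delta_{n}^{2}(a)$ from (\ref{eq:Int-193.5}) and (\ref{eq:Int-195.5}) into a single integral over $\mathbb{R}$: substituting $\omega\mapsto-\omega$ in the piece containing $g_{n}(-\omega)$ and using the evenness of $\omega^{2}e^{-(a\omega)^{2}/2}$ gives
\begin{equation*}
\delta_{n}(a)=2^{3/2}\sqrt{\pi}\int_{-\infty}^{\infty}g_{n}(\omega)\,(a\omega)^{2}e^{-(a\omega)^{2}/2}\,d\omega.
\end{equation*}
Continuity of $\hat{f}^{(m)}$ on $\mathbb{R}$ (Theorem \ref{thm:ch1sec2-2.5}(i)) transfers to continuity of $g_{n}^{(m)}$, so integration by parts is valid across the origin. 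Applying it $m$ times in $\omega$ peels one derivative off $g_{n}$ at each step and pulls out a factor of $-1/a$ by the chain rule (so that $m$ rounds yield $(-1/a)^{m}$), with boundary terms at $\pm\infty$ vanishing by the Gaussian decay of every iterated antiderivative of $(a\omega)^{2}e^{-(a\omega)^{2}/2}$. Assembling these pieces gives exactly the formula stated.

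The main obstacle is justifying the absolute convergence required at each stage of the integration by parts. Near $\omega=0$, hypothesis (ii) (differentiability of the expansion) together with the selection $\lambda+n>m$ gives $g_{n}^{(m)}(\omega)=O(\omega^{n+\lambda-1-m})$ with $n+\lambda-1-m>-1$, which is integrable against the bounded antiderivative factor. At infinity one must balance the Gaussian decay of the $m$-fold antiderivative of $(a\omega)^{2}e^{-(a\omega)^{2}/2}$ — still of order $e^{-(a\omega)^{2}/2}$ times a polynomial — against the growth of $g_{n}^{(m)}$; hypothesis (\ref{eq:Int-190.5}) permits at most $e^{\sigma\omega^{2}}$, so absolute convergence holds whenever $a^{2}/2>\sigma$, a condition automatic in the asymptotic regime $a\to\infty$ treated throughout. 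Once these bounds are in hand the integration-by-parts manipulations are rigorous and the formula for $\delta_{n}(a)$ is established.
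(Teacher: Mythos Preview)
Your proposal is correct and follows the same integration-by-parts mechanism that the paper uses; indeed the paper offers no separate proof of Theorem~\ref{thm:ch1sec4-4.1}, treating it as an immediate specialisation of Theorem~\ref{thm:ch1sec2-2.5}, and you have simply made that specialisation explicit (including the observation that the Gaussian decay of $h$ renders the $\varepsilon$-regulariser superfluous). One small remark: at infinity you invoke the growth bound (\ref{eq:Int-190.5}), but the hypothesis of Theorem~\ref{thm:ch1sec4-4.1} is actually the condition set of Theorem~\ref{thm:ch1sec2-2.5}, whose clause (iv) already forces polynomial decay of $\hat f^{(j)}$, so the side condition $a^{2}/2>\sigma$ is not needed.
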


\section{ Haar wavelet transform}
In this section we choose
\begin{eqnarray*}
\psi(t)=\left\{\begin{array}{cc}
                 1, & 0\leq t<1/2 \\
                 -1, & 1/2\leq t <1 \\
                 0, & \textrm{otherwise}.
               \end{array}\right.
\end{eqnarray*}
Then from ~\cite[p.368]{Debnath},
\begin{eqnarray}
\label{eq:Int-199.5}
\overline{\hat{\psi}}(\omega)=4i
e^{-i\omega/2}\frac{sin^2\omega/4}{\omega}=\frac{i}{\omega}(1-2e^{i\omega/2}+e^{i\omega}).
\end{eqnarray}
Although the condition $\beta >0$ of (\ref{eq:Int-159.5}) is not satisfied in this case but
the result  (\ref{eq:Int-153.5})- (\ref{eq:Int-154.5}) remains valid, cf.~\cite[p.753]{Wong}.

\vspace{.5pc} Clearly,
\begin{eqnarray}
\label{eq:Int-200.5}
h(\omega)=O(\omega), \;\;\; as \;\; \omega \rightarrow
0.
\end{eqnarray}
Assume that $\hat{f}(\omega)$ has an asymptotic expansion of the
form (\ref{eq:Int-161.5}).
\vspace{.5pc} Using (\ref{eq:Int-158.5}) and (\ref{eq:Int-199.5}) we get
\begin{eqnarray*}
    I_1(a)&=&\int^{\infty}_{0}e^{ib\omega}\hat{f}(\omega)\frac{1}{a\omega}(1-2e^{ia\omega/2}+e^{ia\omega})d\omega \\
    &=&\frac{i}{a}F(b)-2i\int^{\infty}_{0}e^{ib\omega}\hat{f}(\omega)\frac{e^{ia\omega/2}}{a\omega}d\omega\\ && + \;i\int^{\infty}_{0}e^{ib\omega}\hat{f}(\omega)\frac{e^{ia\omega}}{a\omega}d\omega,
\end{eqnarray*}
where
\begin{eqnarray*}
F(b)=\int^{\infty}_{0}e^{ib\omega}\frac{\hat{f}(\omega)}{\omega}d\omega.
\end{eqnarray*}
Then, (\ref{eq:Int-165.5}) and the generalized Mellin transform formula ~\cite[Lemma 2, p.198]{Rwong}:
\begin{eqnarray*}
M[e^{it}; z]=e^{i\pi z/2}\Gamma(z)
\end{eqnarray*}
we get
\begin{eqnarray*}
I_1(a) &=&\frac{i}{a}F(b)-\frac{2i}{a}\int^{\infty}_{0}\left[\sum^{\infty}_{s=1}d_s
    \omega^{s+\lambda-2}+g_n(\omega)\right]e^{ia\omega/2}d\omega \\ && + \; \frac{i}{a}\int^{\infty}_{0}\left[\sum^{\infty}_{s=1}d_s
    \omega^{s+\lambda-2}+g_n(\omega)\right]e^{ia\omega}d\omega \\
    &=& \frac{i}{a}F(b)+\frac{2i}{a}\biggr\{\sum^{n-1}_{s=1}d_s\Gamma(s+\lambda-1)(a/2)^{-s-\lambda+1}
    e^{i\pi(s+\lambda)/2}.\\
   \nonumber  && - \;(2i/a)^n\int^{\infty}_{0}g^{(n)}_{n}(\omega)e^{ia\omega/2}d\omega\biggr\}
\end{eqnarray*}
\begin{eqnarray}
\label{eq:Int-201.5}
     \nonumber  && + \;\frac{i}{a}\biggr\{-\sum^{n-1}_{s=1}d_s\Gamma(s+\lambda-1)(a)^{-s-\lambda+1}
    e^{i\pi(s+\lambda)/2} \nonumber \\
    && + \;(i/a)^n\int^{\infty}_{0}g^{(n)}_{n}(\omega)e^{ia\omega}d\omega\biggr\}\nonumber \\&=& \frac{i}{a}F(b)+i\sum^{n-1}_{s=1}d_s\Gamma(s+\lambda-1)a^{-s-\lambda}(2^{s+\lambda}-1)e^{i\pi(s+\lambda)/2} \nonumber \\
    && + \; (i/a)^{n+1}\int^{\infty}_{0}g^{(n)}_{n}(\omega)(e^{ia\omega}-2^{n+1}e^{ia\omega/2})d\omega.
\end{eqnarray}
Notice that for existence of the Mellin transform in the above case
we have to assume that $d_0=0.$
\vspace{.5pc} Similarly,
\begin{eqnarray}
\label{eq:Int-202.5}
I_2(a)\nonumber &=&\frac{i}{a}\int^{0}_{-\infty}e^{ib\omega}\frac{\hat{f}(\omega)}{\omega}d\omega
\nonumber  +  i\sum^{n-1}_{s=1}d_s\Gamma(s+\lambda-1)a^{-s-\lambda}(-1)^{s+\lambda-1}  \\ && \times \;(2^{s+\lambda}-1) e^{i\pi(s+\lambda)/2}
 +  (i/a)^{n+1}\int^{\infty}_{0}g^{(n)}_{n}(-\omega)\nonumber
 \\ && \times \;(e^{-ia\omega}-2^{n+1}e^{-ia\omega/2})d\omega.
 \end{eqnarray}
Finally,using formula ~\cite[(15), p.152]{Erdelyi}, from (\ref{eq:Int-158.5}),(\ref{eq:Int-201.5}) and (\ref{eq:Int-202.5}) we get
\begin{eqnarray}
\label{eq:Int-203.5}
(W_\psi f)(b, a)\nonumber &=&\frac{i}{\sqrt{a}}f^{(-1)}(b) +\frac{i}{\pi}\sum^{n-1}_{s=0}d_s
\Gamma(s+\lambda-1)a^{-s-\lambda+1/2}\\ && \times \{1+(-1)^{s+\lambda-1}\}(2^{s+\lambda}-1)e^{i\pi(s+\lambda)/2}+  \delta_n(a),
\end{eqnarray}
where
\begin{eqnarray*}
f^{(-1)}(b)=(D^{-1}f)(b)
\end{eqnarray*}
and
\begin{eqnarray}
\label{eq:Int-204.5}
\delta_n(a)=(i/a)^{n+1}\frac{\sqrt{a}}{2\pi}\int^{\infty}_{0}g^{(n)}_{n}(\omega)(e^{ia\omega}-2^{n+1}e^{ia\omega/2}d\omega.
\end{eqnarray}
Existence theorem for (\ref{eq:Int-203.5}) is as follows:
\begin{thm}
\label{thm:ch1sec5-5.1}
Assume that $\hat{f}(\omega)$ satisfies conditions of Theorem\ref{thm:ch1sec2-2.5}.
Then the result(\ref{eq:Int-203.5}) holds with
\[
\delta_n(a)=(i/a)^{m+1}\frac{\sqrt{a}}{2\pi}\int^{\infty}_{-\infty}g^{(m)}_{n}(\omega)(e^{ia\omega}-2^{m+1}e^{ia\omega/2})d\omega,
\]
\textit{where} $n$ is the smallest positive integer such
that $\lambda+n>m.$
\end{thm}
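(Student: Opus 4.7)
The plan is to derive the stated expression for $\delta_n(a)$ by combining the $I_1(a)$ and $I_2(a)$ error contributions already assembled in (\ref{eq:Int-201.5})--(\ref{eq:Int-202.5}), and, where needed, to iterate integration by parts to accommodate the $m$-th derivative of $g_n$. Because $0<\lambda\le 1$, the smallest positive integer $n$ with $\lambda+n>m$ is in fact $n=m$, so the half-line formula (\ref{eq:Int-204.5}) — obtained by integrating by parts $n$ times inside $I_1(a)$ — already supplies the required pre-factor $(i/a)^{m+1}$ and the kernel $e^{ia\omega}-2^{m+1}e^{ia\omega/2}$ on $(0,\infty)$. The only remaining task on the $I_1$ side is to confirm the induction that propagates the constants through the chain of $m$ integrations by parts.

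Carrying that induction out, each step replaces $e^{ia\omega}$ by its antiderivative $(ia)^{-1}e^{ia\omega}$ and $e^{ia\omega/2}$ by its antiderivative $2(ia)^{-1}e^{ia\omega/2}$, so together with the sign from integration by parts the pre-factor multiplies by $i/a$ (since $-1/(ia)=i/a$) while the constant in front of $e^{ia\omega/2}$ doubles. A short induction on $k$ therefore shows that after the $k$-th step the outer factor is $(i/a)^{n+1+k}$ and the interior coefficient is $2^{n+1+k}$; evaluating at $k=m-n$ reproduces the kernel $(i/a)^{m+1}\bigl(e^{ia\omega}-2^{m+1}e^{ia\omega/2}\bigr)$ multiplying $g_n^{(m)}(\omega)$.

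Next, I would run the parallel calculation on the $I_2(a)$ error in (\ref{eq:Int-202.5}), which yields
\[
\frac{\sqrt{a}}{2\pi}\,(i/a)^{m+1}\int_0^\infty g_n^{(m)}(-\omega)\bigl(e^{-ia\omega}-2^{m+1}e^{-ia\omega/2}\bigr)\,d\omega.
\]
Under the substitution $\omega\mapsto-\omega$ this becomes the integral over $(-\infty,0)$ of the same integrand $g_n^{(m)}(\omega)\bigl(e^{ia\omega}-2^{m+1}e^{ia\omega/2}\bigr)$ as appears on $(0,\infty)$. Adding the two half-line pieces produces exactly the full-line integral asserted by the theorem.

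The hard part will be justifying that every boundary term in the chain of $m$ integrations by parts truly vanishes. At $\omega=\pm\infty$ this is handled by hypothesis (iv) of Theorem \ref{thm:ch1sec2-2.5} together with the boundedness of the antiderivatives $(ia)^{-k}e^{ia\omega}$ and $(2/(ia))^{k}e^{ia\omega/2}$. At $\omega=0$ the delicate point is that the factor $1/\omega$ in $\overline{\hat\psi}$ could obstruct integrability; however, that factor was already spent in isolating the $(i/a)F(b)$ term during the derivation of (\ref{eq:Int-201.5}), and the side assumption $d_0=0$ noted there, combined with $\lambda+n>m$, forces $g_n(\omega)=O(\omega^{\lambda+n-1})$ as $\omega\to 0$, so $g_n^{(j)}(0)=0$ for $0\le j\le m-1$ and all boundary contributions vanish. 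Once this bookkeeping is in place, the final identity is immediate.
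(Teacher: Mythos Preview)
Your proposal is correct and follows the same route the paper takes implicitly: the paper offers no separate proof of Theorem~\ref{thm:ch1sec5-5.1} but simply records it as the existence theorem attached to the derivation (\ref{eq:Int-201.5})--(\ref{eq:Int-204.5}), relying on the integration-by-parts machinery of Theorems~\ref{thm:ch1sec2-2.4}--\ref{thm:ch1sec2-2.5}. Your explicit observation that, for $0<\lambda\le 1$ and $m\ge 1$, the minimal $n$ with $\lambda+n>m$ is $n=m$ is exactly what makes the half-line remainder (\ref{eq:Int-204.5}) coincide with the $I_1$-contribution to the asserted $\delta_n(a)$, and your substitution $\omega\mapsto -\omega$ in the $I_2$-remainder to assemble the full-line integral is the intended (if unstated) step.
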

\thebibliography{00}
\bibitem{Debnath} Debnath, Lokenath; Wavelet Transforms and their Applications,Birkh\"auser (2002).
\bibitem{Erdelyi}Erde'lyi, A., W. Magnus, F. Oberhettinger and F. G. Tricomi, Tables of Integral Transforms, Vol. 1. McGraw-Hill, New York (1954).
\bibitem{Wong} Wong, R., Explicit error terms for asymptotic expansion of Mellin convolutions , J. Math. Anal. Appl. 72(1979), 740-756.
\bibitem{Rwong} Wong, R., Asymptotic Approximations of Integrals, Acad. Press, New York (1989).
\bibitem{Aapathak} R S Pathak and Ashish Pathak, Asymptotic Expansions of the Wavelet Transform for Large and Small Values of b, International Journal of Mathematics and Mathematical Sciences, Vol. 2009, 13 page, doi:10.1155/2009/270492.
\\
\begin{flushleft}
R S Pathak \\
DST Center for Interdisciplinary Mathematical Sciences \\
Banaras Hindu University \\
Varanasi-221005,India \\
e-mail: ranshankarpathak@yahoo.com \\
\vspace*{0.5cm}
Ashish Pathak \\
Department of Mathemtics and Statistics
\\  Dr. Harisingh Gour Central University
\\   Sagar-470003, India.
\\  e-mail: ashishpathak@dhsgsu.ac.in \\  \
   \end{flushleft}
\end{document}